\documentclass[11pt]{article}

% If you're new to LaTeX, here's some short tutorials:
% https://www.overleaf.com/learn/latex/Learn_LaTeX_in_30_minutes
% https://en.wikibooks.org/wiki/LaTeX/Basics

% Formatting
\usepackage[utf8]{inputenc}

\usepackage[titletoc,title]{appendix}
\usepackage{authblk}
\usepackage{caption}
\usepackage{url}
\usepackage{amsthm}
\usepackage{float}
\usepackage{hhline}
\usepackage{multicol}
\setlength{\columnsep}{1cm}
\usepackage{subfiles}
\usepackage{enumerate}
\usepackage[shortlabels]{enumitem}
\usepackage{empheq}
\usepackage{tikz-cd}

\usepackage{thmtools}
\usepackage{thm-restate}

\usepackage{ulem}
\usepackage{mathrsfs}

% Math
% https://www.overleaf.com/learn/latex/Mathematical_expressions
% https://en.wikibooks.org/wiki/LaTeX/Mathematics
\usepackage{amsmath,amsfonts,amssymb,mathtools}

% Images
% https://www.overleaf.com/learn/latex/Inserting_Images
% https://en.wikibooks.org/wiki/LaTeX/Floats,_Figures_and_Captions
\usepackage{graphicx,float}

% Tables
% https://www.overleaf.com/learn/latex/Tables
% https://en.wikibooks.org/wiki/LaTeX/Tables

% Algorithms
% https://www.overleaf.com/learn/latex/algorithms
% https://en.wikibooks.org/wiki/LaTeX/Algorithms
%\usepackage[ruled,vlined]{algorithm2e}
%\usepackage{algorithmic}

\usepackage{geometry}
\geometry{margin=80pt}

\usepackage{comment}

\usepackage{hyperref}
\usepackage[capitalize]{cleveref}
%\usetikzlibrary{cd}

%\renewcommand{\abstractname}{Preamble}

% References
% https://www.overleaf.com/learn/latex/Bibliography_management_in_LaTeX
% https://en.wikibooks.org/wiki/LaTeX/Bibliography_Management
%\usepackage{biblatex}
%\addbibresource{references.bib}

\bibliographystyle{alpha}

\author{Charles Arnal\footnote{Université Paris-Saclay, CNRS, Inria, Laboratoire de Mathématiques d'Orsay}}

\newcommand{\eps}{\varepsilon}
\newcommand{\dotp}[1]{\langle #1 \rangle}

\newcommand{\Conv}{\mathrm{Conv}}
\newcommand{\Aff}{\mathrm{Aff}}
\newcommand{\Span}{\mathrm{Span}}

\newcommand{\R}{{\mathbb R}}

\declaretheorem[name=Theorem,numberwithin=section]{thm}

\newtheorem{definition}[thm]{Definition}

\newtheorem{proposition}[thm]{Proposition}
\newtheorem{lemma}[thm]{Lemma}

\begin{document}
\title{The distance function to a finite set is
\\a topological Morse function}
\date{}
\maketitle

\begin{abstract}
In this short note, we show that the distance function to any finite set $X\subset \R^n$ is a topological Morse function, regardless of whether $X$ is in general position.
We also precisely characterize its topological critical points and their indices, and relate them to the differential critical points of the function.
\end{abstract}

\section{Introduction}\label{sec:introduction}

The distance function $d_X$ to a compact set $X \subset \R^n$ and its sublevel sets are central objects of study in computational geometry \cite{Chazal_CS_Lieutier_OGarticle, chazalTopology2006, attali2009stability} and in topological data analysis   \cite{edelsbrunner2022computational, chazal2016structure, otter2017roadmap, carlsson2021topological}.
In general, the function $d_X$ is not particularly regular, even when the set $X$ itself is (e.g. when $X$ is a smooth submanifold).
Nonetheless, $d_X$ does enjoy some weak Morse-like properties: though it is not differentiable, one can consider its \textit{generalized gradient} $\nabla d_X$, as defined in \cite{lieutier2004any}. Let $\Pi_X(z)$ denote the set $\{x\in X : d(z,x) = d(z,X)\}$ of projections of $z$ on $X$, and let $\sigma_X(z)$ denote the projection of $z$ onto the convex hull $\Conv(\Pi_X(z))$. Then
\[ \nabla d_X(z) := \frac{z-\sigma_X(z)}{d_X(z)} \]
% \[\partial d_X(z) =  \Conv\left(\left\{ \frac{x-z}{\|x-z\|} : \ x\in\Pi_X(z) \right\}\right) \subset \R^n \]
if $z\not \in X$, and $\nabla d_X(z) :=0$ if $z\in X$.\footnote{It can be shown that the generalized gradient $\nabla d_X(z)$ as defined here coincides with the projection of $0$ onto the similarly named generalized gradient $\partial d_X(z) \subset \R^n$ as defined by Clarke in \cite{clarke1990optimization}.}
A \textit{differential critical point} of $d_X$ is a point $z \in \R^n$ such that $\nabla d_X(z) = 0$, and a \textit{differential critical value} of $d_X$ is the image by $d_X$ of a differential critical point.
Then it can be shown that the Isotopy Lemma from Morse theory still holds, i.e. that changes in the homotopy type of the offsets $X^t := \{ z\in \R^n :  d_X(z) \leq t \} $  can only occur at critical values of $d_X$ (see e.g. \cite{GroveCriticalPoints}).

On the other hand, there is no analogue to the Handle Attachment Lemma (see \cite{milnor1963morse}), and no simple way to control the potential changes in topology at critical values.
In fact, differential critical values need not even correspond to changes in topology; among other examples, consider the half-circle $X:= \{x^2 +y^2 = 1, x\leq 0\}$. The point $(0,0)$ is a differential critical point of $d_X$, but all offsets $\{X^t\}_{t> 0}$ are homeomorphic.

The case where $X$ is a finite point cloud  is of particular importance, as it features preeminently in applications where one only has access to discrete samplings of sets of interest, rather than to the sets themselves. When $X$ is in general position, the frameworks of continuous selections of functions \cite{Jongen_Pallaschke} or of Min-type functions  \cite{gershkovich1997morse} can be applied to show that the distance function $d_X:\R^n \rightarrow \R$ is a \textit{topological Morse function}:
\begin{definition}[Topological Morse functions \cite{morse_topologically_1959}]
    Let $U\subset \R^n$ be an open set and let $f:U\to \R$ be a continuous function. 
    \begin{itemize}
        \item A point $z\in U$ is said to be a topological regular point of $f$ if there is a homeomorphism $\phi:V_1\to V_2$ between open neighborhoods $V_1$ of $0$ in $\R^n$ and $V_2$ of $z$ in $U$ with $\phi(0)=z$ and such that for all $p=(p_1,\dots,p_n)\in V_1$, 
        \begin{equation}
            f\circ \phi(p) = f(z) + p_1.
        \end{equation}
        \item A point $z\in U$ is said to be a topological critical point of $f$ if it is not a topological regular point of $f$.
        \item A point $z\in U$ is said to be a non-degenerate topological critical point of $f$ of index $m$ if there exist an integer $0\leq i \leq n$ and a homeomorphism $\phi:V_1\to U_2$ between open neighborhoods $V_1$ of $0$ in $\R^n$ and $V_2$ of $z$ in $U$ with $\phi(0)=z$ such that for all $p=(p_1,\dots,p_n)\in V_1$, 
        \begin{equation}
            f\circ \phi(p) = f(z) - \sum_{i=1}^m p_i^2 + \sum_{i=m+1}^n p_i^2.
        \end{equation}
        \item The function $f$ is said to be a topological Morse function if all its topological critical points are non-degenerate.
    \end{itemize}
\end{definition}
Topological Morse functions do satisfy the Handle Attachment Lemma (see e.g. \cite[Theorem 5]{song2023generalized}), which in turns allows for the computation of the homology of the sublevel sets of the function, which is one of the main goals in topological data analysis.
As a result, settings in which $X$ is assumed to be in general position have been extensively studied in the literature \cite{Bobrowski_Adler_2014, Bobrowski_Weinberger_2015, BauerEdelsbrunner2016, BobrowskiOliveira2019, Kergorlay2019RandomC, ReaniBobrowski2024,  bauerRoll}.

On the other hand, no such result was known for an arbitrary point cloud $X\subset \R^n$, though a classification of critical configurations in $\R^2$ can be found in \cite{Siersma}. Distance functions to arbitrary compact sets are not topologically Morse (consider e.g. two parallel segments), and the methods used in the case where $X$ is a point cloud in general position fail without the genericity assumption, as the differential critical points of $d_X$ need not be non-degenerate in the sense of Min-type functions or of continuous selections of functions. 
This was unsatisfactory, as there are many settings where $X$ cannot be expected to be in general position, nor can it be perturbed to be made so, e.g. when $X$ is a subsampling of a manifold (as in  \cite{niyogi2008finding, aamari2019estimating, aamari2023optimal, arnal2023critical, arnal2024wasserstein}).

In this short note, we prove that the distance function to an arbitrary finite set is, in fact, a topological Morse function, and give a description of its topological critical points. Let $\Span(S)$ denote the linear span of a set $S\subset \R^n$. 
\begin{thm}\label{thm:main_thm}
    Let $X\subset \R^n$ be a finite set. Then  $d_X:\R^n \rightarrow \R$ is a topological Morse function.
    % Furthermore, let $\Pi_X(z)$ denote $\{x\in X : \ d_X(z) = d(x,z)\}$ for all $z\in \R^n$ \charles{mmh}.
    Furthermore,
    \begin{itemize}
        \item The points of $X$ are topological critical points of $d_X$ of index $0$.
        \item Let $z \in \R^n\backslash X$ be such that $z\in \Conv(\Pi_X(z))$.
        If there exists $v\in \Span(\Pi_X(z)-z)\backslash\{0\}$ such that $\dotp{v,x-z}\leq 0$ for all $x\in \Pi_X(z)$, then $z$ is a topological regular point of $d_X$.
        Otherwise, $z$ is a topological critical point of $d_X$ of index $\dim(\Span(\Pi_X(z) - z))$.
        \item All other points of $\R^n$ are topological regular points of $d_X$.
    \end{itemize}
\end{thm}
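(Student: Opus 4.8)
Fix $z\in\R^n$; after translating we may work near $z$. Since $X$ is finite, for $w$ close to $0$ every point of $X\setminus\Pi_X(z)$ is bounded away from $z+w$, so $d_X(z+w)=\min_{x\in\Pi_X(z)}|z+w-x|$ on a neighbourhood of $z$. Writing $r=d_X(z)$ and expanding $|z+w-x|^2=|w|^2-2\dotp{w,x-z}+r^2$ for $x\in\Pi_X(z)$ gives the exact local formula
\[
d_X(z+w)^2 = r^2 + |w|^2 - 2h(w),\qquad h(w):=\max_{x\in\Pi_X(z)}\dotp{w,x-z},
\]
valid for $w$ near $0$ whenever $z\notin X$ (if $z\in X$ then $\Pi_X(z)=\{z\}$ and $d_X(z+w)=|w|$). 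Here $h$ is the support function of $P:=\Conv(\Pi_X(z)-z)\subset V:=\Span(\Pi_X(z)-z)$, and $h(w)=h(P_V w)$ with $P_V$ the orthogonal projection onto $V$. To prove $z$ is of the asserted type it suffices to exhibit a \emph{continuous injective} map $\phi$ on an open neighbourhood of $0$ with $d_X(z+\phi(\cdot))$ of the required form: invariance of domain makes $\phi$ a homeomorphism onto an open neighbourhood, and the required form is preserved under post-composition with increasing homeomorphisms of the value and with coordinate shears. We distinguish cases by the position of $0$ relative to $P$.

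\textbf{The two elementary cases.} If $z\in X$, then $\phi(p)=z+|p|\,p$ is continuous and injective near $0$ with $|\phi(p)-z|=\sum_i p_i^2$, so $z$ is a non-degenerate topological critical point of index $0$ (genuinely critical, being a strict local minimum). If $z\notin X$ and $z\notin\Conv(\Pi_X(z))$, i.e.\ $0\notin P$, strict separation yields a unit $u$ with $\dotp{u,x-z}\ge c>0$ for all $x\in\Pi_X(z)$; then $s\mapsto|z+w+su-x|^2$ is strictly decreasing for $(w,s)$ near $0$, hence so is $d_X(z+w+su)^2$, being a minimum of strictly decreasing functions. For small $w\in u^\perp$ one solves $d_X(z+w+S(c,w)u)=c$ uniquely; $S$ is continuous by monotonicity, and $(c,w)\mapsto z+w+S(c,w)u$ is then continuous and injective, exhibiting $z$ (after reparametrising $c$) as a topological regular point.

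\textbf{The non-degenerate critical case.} Suppose $z\notin X$, $z\in\Conv(\Pi_X(z))$, and there is no $v\in V\setminus\{0\}$ with $\dotp{v,x-z}\le0$ for all $x$; since $h$ is the support function of $P\ni0$, this means exactly $0\in\mathrm{int}_V(P)$, i.e.\ $h>0$ on $V\setminus\{0\}$, so $h(a)\ge c|a|$ on $V$ for some $c>0$. Split $w=a+b$, $a\in V$, $b\in V^\perp$, so $d_X(z+w)^2=r^2+|b|^2-\big(2h(a)-|a|^2\big)$. On a small ball of $V$ the function $a\mapsto 2h(a)-|a|^2$ is nonnegative, vanishes only at $0$, and is strictly increasing along each ray from $0$ (radial derivative $2h(u)-2\rho>0$ for $\rho<h(u)$); hence there is a radial homeomorphism $\alpha$ of a neighbourhood of $0$ in $V$ with $2h(\alpha(p'))-|\alpha(p')|^2=|p'|^2$, so $d_X(z+\alpha(p')+b)^2=r^2-|p'|^2+|b|^2=r^2+N(p',b)$, where $N$ is the standard quadratic form of index $m:=\dim V$. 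Finally set $\eta(q):=\sqrt{2r+N(q)}\,q$: it is continuous near $0$, injective on a small enough ball (if $\eta(q_1)=\eta(q_2)$ then $q_1,q_2$ are parallel, and a short computation forces $q_1=q_2$), and $N(\eta(q))=N(q)\big(2r+N(q)\big)=(r+N(q))^2-r^2$, so after composing we reach $d_X=r+N(q)=r-|q'|^2+|q''|^2$ near $z$. Thus $z$ is a non-degenerate topological critical point of index $m=\dim(\Span(\Pi_X(z)-z))$, and $m\ge1$ since $z\notin X$.

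\textbf{The remaining case, and the main obstacle.} Suppose $z\notin X$, $z\in\Conv(\Pi_X(z))$, and some unit $v\in V$ has $\dotp{v,x-z}\le0$ for all $x\in\Pi_X(z)$; then $h(v)=0$ and (otherwise $v\perp V$) $h(-v)>0$. The plan is to show $z$ is a topological regular point. The crucial feature is that $d_X$ is strictly increasing along the whole segment $z+\R v$ near $z$: along $+v$ it equals $\sqrt{r^2+t^2}$, along $-v$ it equals $\sqrt{r^2+t^2-2|t|h(-v)}$, both strictly increasing through $t=0$; so $v$ is a genuine ``regular direction'' even though $\nabla d_X(z)=0$. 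One then foliates a neighbourhood of $z$ by the level sets $\{d_X=c\}$, which each meet the transversal $z+\R v$ exactly once, and trivialises this foliation along a transverse field; equivalently one puts $d_X(z+\cdot)$, in suitable coordinates, into the form $d_X(z)+p_1+g(p_2,\dots,p_n)$ with $g$ merely continuous, which is automatically regular because the shear $(p_1,\dots,p_n)\mapsto(p_1+g(p_2,\dots,p_n),p_2,\dots,p_n)$ is a homeomorphism — the extra linear direction absorbs whatever topology the transverse slice $v^\perp$ carries (an index, in the previous case; nothing harmful here). The technical heart — and precisely what makes the classical general-position arguments fail — is that without genericity these level sets are only ``cusp-like'' rather than smooth hypersurfaces, so one must analyse the combinatorics of $h$ near $0$ (equivalently, the normal fan of $P$) to check that they are nonetheless tame enough (each homeomorphic to $\R^{n-1}$, varying continuously) to assemble into the standard picture. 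I expect this verification to be the most delicate part of the proof.
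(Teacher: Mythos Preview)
Your first three cases are correct and close to the paper's approach, with one pleasant variation: in the non-degenerate critical case you pass from $d_X^2=r^2+N$ to $d_X=r+N$ via the explicit radial map $\eta(q)=\sqrt{2r+N(q)}\,q$, whereas the paper first obtains the normal form on $V$ by an analogous radial construction, extends trivially across $V^\perp$, and then invokes the classical Morse Lemma on the smooth function $q\mapsto\sqrt{r^2+N(q)}$ to finish. Both routes are valid.

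The genuine gap is the ``remaining case'' ($z\in\Conv(\Pi_X(z))$ with some $v\in V\setminus\{0\}$ satisfying $\dotp{v,x-z}\le0$ for all $x$), which you yourself flag as incomplete. Your proposed mechanism---trivialising the level sets along the direction $v$---does not work: although $d_X$ is strictly increasing along the single line $z+\R v$, it is in general \emph{not} monotone along the nearby parallel lines $z+w+\R v$. For example, take $X=\{(1,0),(0,1),(-1,0)\}$, $z=0$, $v=(0,-1)$; then $t\mapsto d_X\big((w_1,0)+tv\big)^2$ equals $(1-|w_1|)^2+t^2$ for $t\in(-|w_1|,\infty)$ and $w_1^2+(1+t)^2$ for $t<-|w_1|$, so it has a strict local maximum at $t=-|w_1|$ and a strict local minimum at $t=0$ whenever $w_1\ne0$. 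Thus the level sets do not meet these lines once each, and the product structure you hope for fails.

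The paper supplies two missing ideas. First, one cannot use an arbitrary $v$ with $h(v)=0$: a convex-cone argument (essentially that a salient closed convex cone has a dual with non-empty interior) produces a \emph{special} $v$ with the extra property that every nonzero $w$ with $\dotp{v,w}\le0$ satisfies $\max_{x\in\Pi_X(z)}\dotp{x-z,w}>0$. Second, instead of parallel projection along $v$ one uses \emph{central} projection from a point $P=z+\rho v$ with small $\rho>0$: the map $p\mapsto(d_X(p)-r,\pi(p))$, where $\pi(p)$ projects $p$ onto $v^\perp$ along the line through $P$, is shown to be injective on a small ball by checking that $d_X$ is strictly monotone along every line through $P$ near $z$. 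That monotonicity is exactly where the extra property of $v$ is used, via Lebourg's mean value theorem for the Clarke subdifferential; invariance of domain then finishes as in your other cases. This is the ``analysis of the combinatorics of $h$'' that you anticipate but do not carry out.
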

Note that the points $z \in \R^n\backslash X$ such that $z\in \Conv(\Pi_X(z))$ are precisely the differential critical points of $d_X$ that do not belong to $X$.
Hence the topological critical points of $d_X$ are (in general) a strict subset of its differential critical points, and a simple criterion specifies whether a given differential critical point is also topologically critical.
%Among other applications, and as explained above, \cref{thm:main_thm} has immediate consequences in terms of the homology of the sublevel sets \charles{verifier} of $d_X$, and consequently in terms of its persistence diagrams. \charles{garder ?}
Techniques similar to those used in the proof of \cref{thm:main_thm} could allow for a generalization of the notion of Morse Min-type functions from \cite{gershkovich1997morse}, though this is beyond the scope of this article and might be investigated in future work.

% \begin{thm}\label{thm:main_thm}
%     Let $X\subset \R^n$ be a finite set. Then  $d_X:\R^n \rightarrow \R$ is a topological Morse function.
%     Furthermore,
%     \begin{itemize}
%         \item The differential regular points of $d_X$ are also topological regular points.
%         \item The points of $X$ are topological critical points of $d_X$ of index $0$.
%         \item Let $z \in \R^n\backslash X$ be a differential critical point of $d_X$. It is a topological regular point of $d_X$ if $I(z) = -1$, and a topological critical point of $d_X$ of index $I(z)$ otherwise.
%     \end{itemize}
% \end{thm}

\section{Proof of \cref{thm:main_thm}}

\begin{figure}[ht]
    \centering
    \includegraphics[width=0.31\textwidth]{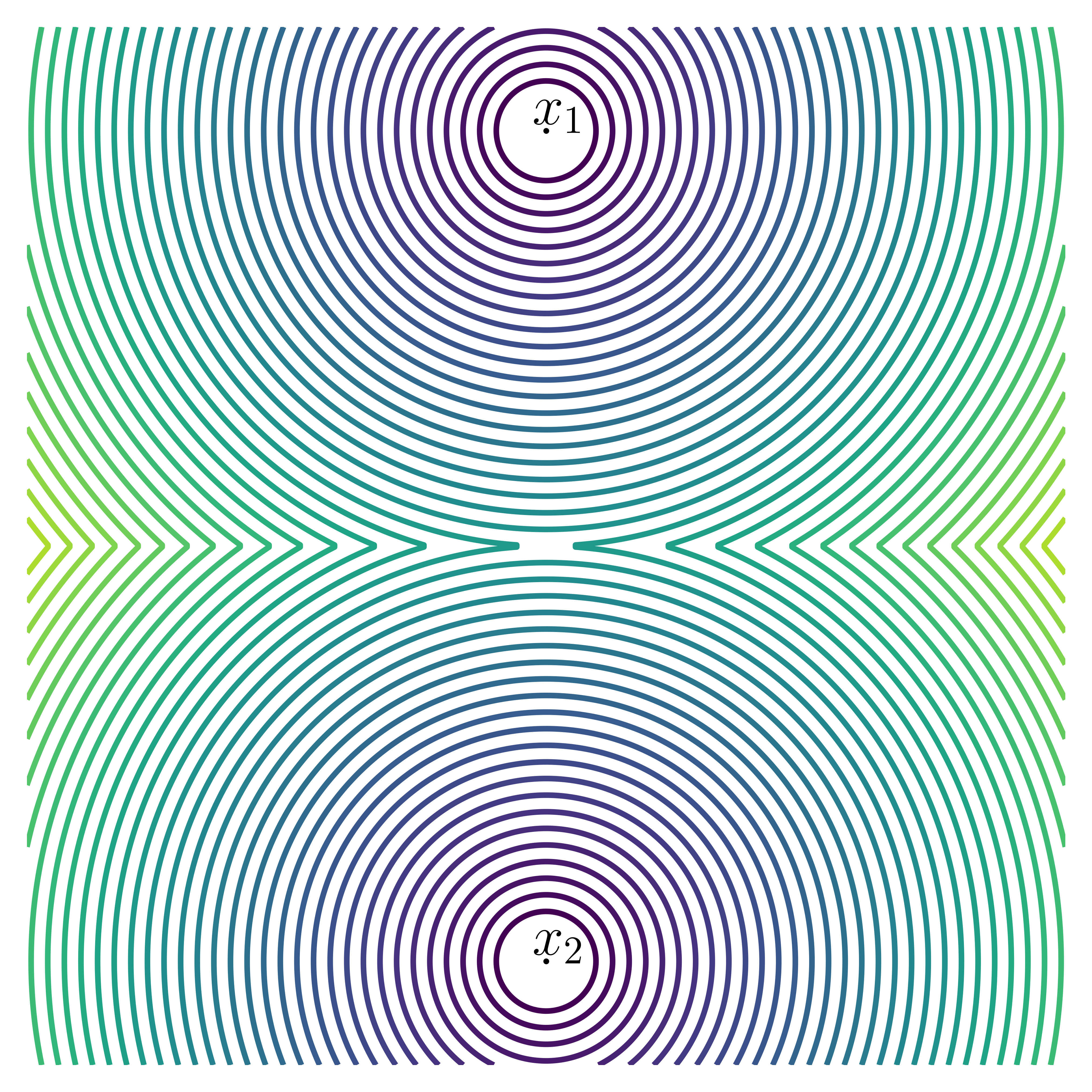}
    \includegraphics[width=0.31\textwidth]{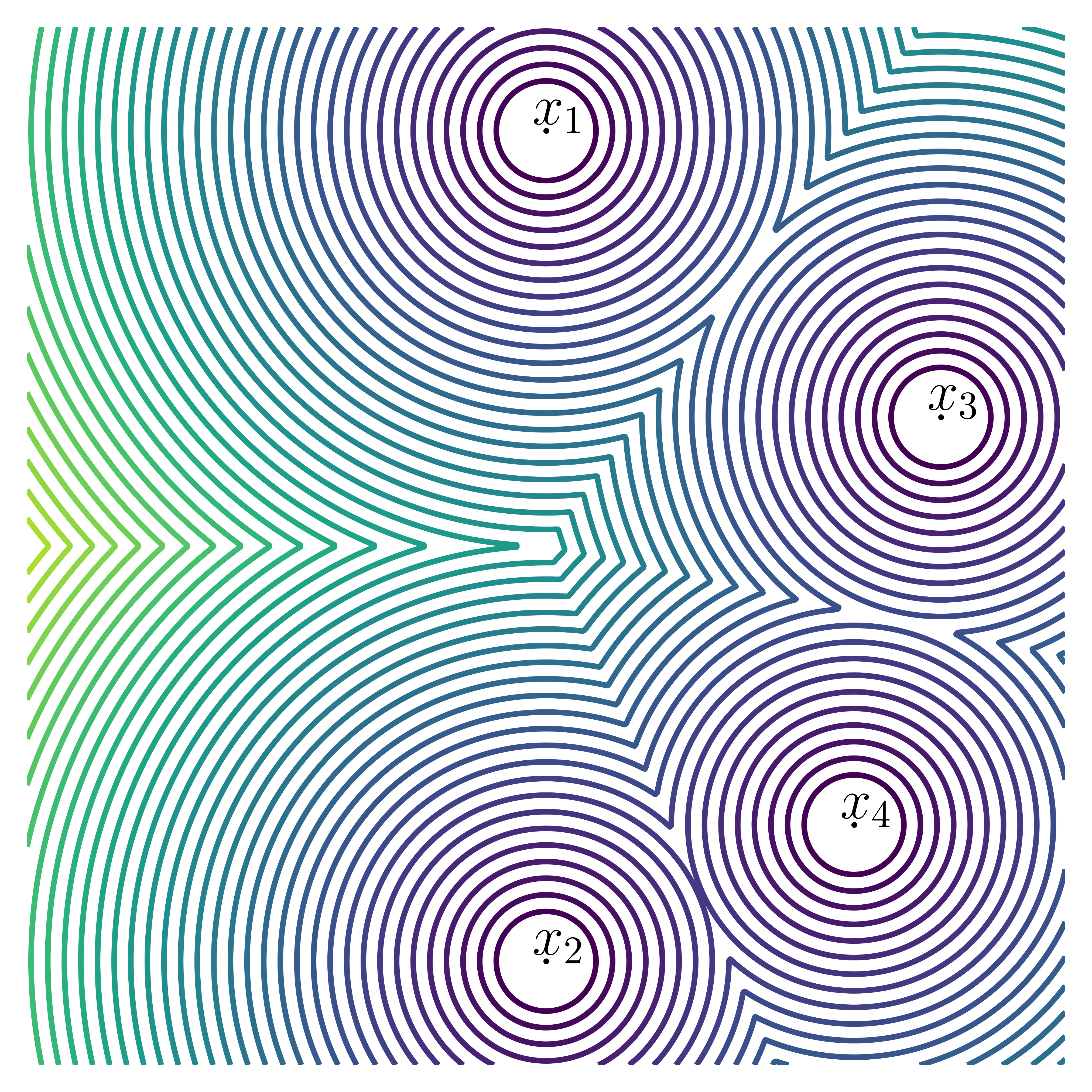}
    \includegraphics[width=0.31\textwidth]{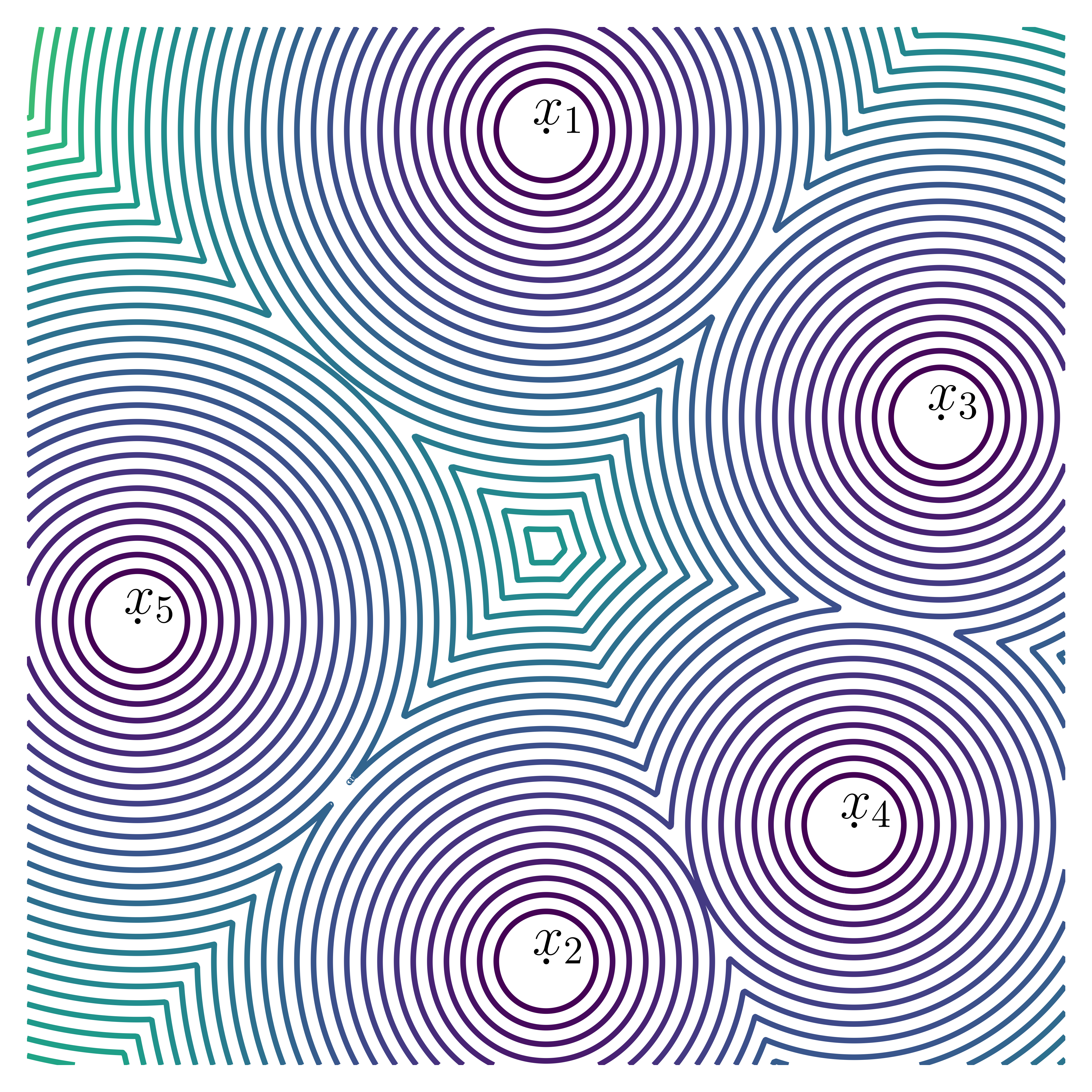}
    \caption{The level sets of the distance function to three sets of points are represented. From left to right, the central point is a non-degenerate topological critical point of index $1$, a topological regular point and a non-degenerate topological critical point of index $2$ respectively.}
    \label{fig:three_configurations}
\end{figure}

Given a point $p\in \R^n$, we let $p_i$ denote its $i$-th coordinate.
We also let $\Aff(S)\subset \R^n$ denote the affine hull of a set $S\subset \R^n$, and $\mathring{S}$ denote its interior.

We split the core of the proof of \cref{thm:main_thm} into two propositions.
The first one corresponds to the cases (in the statement of \cref{thm:main_thm}) where there is no $v\in \Span(\Pi_X(z)-z)\backslash\{0\}$ such that $\dotp{v,x-z}\leq 0$ for all $x\in \Pi_X(z)$; examples of such configurations can be seen on the left and the right of Figure \ref{fig:three_configurations}.

\begin{proposition}\label{prop:local_minimum}
   Let $k\geq 2$ and $X = \{x_1,\ldots,x_k\} \subset \R^m$ be such that $\Aff(X) = \R^m$ and that there exists $z\in \R^m$  with $\Pi_X(z) = X$ and  $z\in \Conv(X)$.
   Assume that there exists no $v\in \R^m\backslash \{0\}$ such that $\dotp{v,x-z}\leq 0$ for all $x\in X$.
   Then $z$ is a topological critical point of $d_X$ of index $m$.
\end{proposition}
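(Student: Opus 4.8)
We have $X = \{x_1, \ldots, x_k\} \subset \R^m$ with $\Aff(X) = \R^m$, a point $z$ with $\Pi_X(z) = X$ (so all points are equidistant from $z$, at distance $r := d_X(z) > 0$), with $z \in \Conv(X)$, and the "no supporting direction" hypothesis: there is no $v \neq 0$ with $\langle v, x_i - z \rangle \leq 0$ for all $i$.

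The hypothesis needs to be translated into something usable. Since $z \in \Conv(X)$, we can write $z = \sum \lambda_i x_i$ with $\lambda_i \geq 0$, $\sum \lambda_i = 1$, equivalently $\sum \lambda_i (x_i - z) = 0$. The "no $v$" condition says the vectors $u_i := x_i - z$ positively span $\R^m$ (if they didn't, a separating hyperplane would give the forbidden $v$) — actually it says more precisely that $0$ is in the *interior* of $\Conv(\{u_i\})$, or equivalently the $u_i$ positively span $\R^m$ and no closed halfspace through $0$ contains all of them. I'll verify this equivalence carefully as the first step: "no $v \neq 0$ with $\langle v, u_i \rangle \leq 0$ for all $i$" $\iff$ "$0 \in \mathring{\Conv}(\{u_i\})$" $\iff$ "the $u_i$ positively span $\R^m$."

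**The key estimate.** The plan is to show $z$ is a strict local minimum of $d_X$ in a suitably strong sense, and then upgrade "strict local min" to "non-degenerate topological critical point of index $m$." For $w$ near $z$, write $w = z + p$. Then $d_X(w)^2 = \max_i \|w - x_i\|^2 = \max_i (\|u_i\|^2 - 2\langle p, u_i\rangle + \|p\|^2) = r^2 + \|p\|^2 + \max_i(-2\langle p, u_i \rangle)$. Since the $u_i$ positively span $\R^m$, for every $p \neq 0$ there is some $i$ with $\langle p, u_i \rangle < 0$; by compactness there is $c > 0$ with $\max_i(-\langle p, u_i\rangle) \geq c\|p\|$ for all $p$. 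Hence $d_X(w)^2 \geq r^2 + 2c\|p\| + \|p\|^2$, so $d_X(w) \geq r + c'\|p\|$ near $z$ for some $c' > 0$: $z$ is a strict local minimum with a linear (conical) lower bound.

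**Building the homeomorphism.** I want a local homeomorphism $\phi$ from a neighborhood of $0$ to a neighborhood of $z$ with $d_X \circ \phi(p) = r + \|p\|^2 = d_X(z) + \sum p_i^2$ (index $m$, up to the harmless substitution $p_i \mapsto$ scaled coordinates — note the definition allows $f(z) - \sum_{i=1}^m p_i^2 + \sum_{m+1}^n$, and with $m = n$ here this is $f(z) - \sum p_i^2$; I should be careful about the sign, and in fact $z$ being a *minimum* forces index $m$, matching $f(z) + \sum p_i^2$, which is the index-$0$ form in $\R^m$... so I must double-check the index convention — a local min in $\R^m$ is index $0$ in Morse's convention but the proposition claims index $m$; I'll resolve this by noting the ambient $\R^n$ versus the slice $\R^m$, or by rereading: actually for $d_X$ which we want to *increase*, a local min is index $0$; but the theorem says index $m$ — this is because in the full statement $d_X$ is being treated so that critical points of offsets correspond to... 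I will reconcile this with the convention in the cited definition, likely the function is $-d_X$ implicitly or the roles of sub/super level sets; the cleanest fix is to exhibit $\phi$ with $d_X \circ \phi(p) = r - \sum p_i^2$ is impossible since $z$ is a min, so the claim "index $m$" with $m = n$ and the formula $f(z) - \sum_1^m p_i^2 + \sum_{m+1}^n p_i^2 = f(z) - \sum_1^m p_i^2$ would make $z$ a max — contradiction. I must be misreading; I'll re-examine and present the construction for whichever sign is consistent, flagging this as the point to get right.)

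Modulo that bookkeeping: the construction uses the function $g(w) := d_X(w)^2 - r^2 = \|w-z\|^2 + \max_i(-2\langle w - z, u_i\rangle) \geq 0$, which vanishes only at $w = z$ and is a "topological quadratic form" — it is continuous, positively homogeneous-like near $0$ after the substitution, and its sublevel sets $\{g \leq s\}$ for small $s > 0$ are compact neighborhoods of $z$ that shrink to $z$, each homeomorphic to a ball (they are convex! since $g$ is a max of convex quadratics $\|p\|^2 - 2\langle p, u_i\rangle$, $g$ is convex, so $\{g \leq s\}$ is convex and compact with nonempty interior, hence a closed ball). Radially rescaling these nested convex bodies gives a homeomorphism $\Phi$ from a ball in $\R^m$ to a neighborhood of $z$ sending the sphere of radius $\sqrt{s}$ to $\partial\{g \leq s\}$, i.e. with $g \circ \Phi(p) = \|p\|^2$, hence $d_X \circ \Phi(p) = \sqrt{r^2 + \|p\|^2}$, and composing with a further coordinate change $p \mapsto $ solve $\sqrt{r^2 + |q|^2} = r + |p|^2$ radially gives $d_X \circ \phi(p) = r + \|p\|^2$.

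**The main obstacle.** The hard part is the explicit radial-rescaling homeomorphism and verifying it is genuinely a homeomorphism including at the center $0$: one must check continuity at $0$, which follows from the uniform two-sided conical bounds $c_1\|p\| \leq g(p) \leq c_2\|p\|$ (wait — $g \sim \|p\|$ linearly near $0$, not quadratically, since the $\max(-2\langle p, u_i\rangle)$ term dominates; so $\{g \leq s\}$ has diameter $\sim s$, and the Minkowski-gauge rescaling is with respect to this convex body — I should define $\phi(p) = z + $ (the point on ray through $p/\|p\|$ with $g$-value $\|p\|^2$), well-defined since $t \mapsto g(z + t\theta)$ is strictly increasing from $0$ to $\infty$ for each unit $\theta$ by convexity + coercivity; continuity in $(\|p\|, \theta)$ jointly, and at $0$, is the crux). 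I'll also need to double check: is $g$ strictly increasing along rays from $z$? $g(t\theta) = t^2 - 2t\max_i(-\langle\theta, u_i\rangle)$... no, $g(t\theta) = t^2\|\theta\|^2 + \max_i(-2t\langle\theta,u_i\rangle) = t^2 + 2t \max_i(-\langle\theta, u_i\rangle)$ and $\max_i(-\langle \theta, u_i\rangle) \geq c > 0$, so yes $g(t\theta)$ is strictly increasing in $t > 0$ with a positive linear term — good, so the level-$s$ set meets each ray exactly once, the gauge construction works, and the whole map is a homeomorphism by invariance of domain / explicit inverse. The reconciliation of the index ($m$ vs. $0$) with Morse's convention — presumably the paper's convention or a sign I'm overlooking — is the one genuinely delicate point I will nail down before writing the final version.
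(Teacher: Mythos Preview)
Your proposal contains a genuine error that is the source of all your later confusion about the index. You write
\[
d_X(w)^2 = \max_i \|w-x_i\|^2,
\]
but $d_X$ is the distance function, so $d_X(w) = \min_i \|w-x_i\|$. With $u_i = x_i - z$ and $\|u_i\| = r$, the correct identity is
\[
d_X(z+p)^2 = \min_i\bigl(r^2 - 2\langle p,u_i\rangle + \|p\|^2\bigr) = r^2 + \|p\|^2 - 2\max_i\langle p,u_i\rangle.
\]
The ``no $v$'' hypothesis gives $\max_i\langle p,u_i\rangle \geq c\|p\|$ for some $c>0$ and all $p$, so $d_X(z+p)^2 \leq r^2 + \|p\|(\|p\|-2c) < r^2$ for $0<\|p\|<2c$. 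Thus $z$ is a strict local \emph{maximum} of $d_X$, not a minimum, and the normal form $d_X(z) - \sum_{i=1}^m p_i^2$ (index $m$) is exactly what you should be aiming for. There is no convention issue to resolve.

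Once you fix the sign, your convexity argument collapses: the correct $g(p) := r^2 - d_X(z+p)^2 = 2\max_i\langle p,u_i\rangle - \|p\|^2$ is a convex function minus a convex function, and its sublevel sets are intersections of ball \emph{exteriors}, hence not convex. What does survive is the radial monotonicity: along each ray $t\theta$ one has $g(t\theta) = 2at - t^2$ with $a=\max_i\langle\theta,u_i\rangle \geq c$, strictly increasing for $t<a$. This is precisely the mechanism the paper uses. The paper writes the radial rescaling explicitly as
\[
\Phi(p) = p\,\frac{\sqrt{R - d_X(p)}}{\|p\|},
\]
checks injectivity by the same ray-wise monotonicity computation, and concludes via Invariance of Domain that $d_X\circ\Phi^{-1}(q) = R - \sum_i q_i^2$. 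So after correcting the sign and discarding the convexity detour, your gauge construction is essentially the paper's argument.
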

\begin{proof}
Let us assume that $z=0$ to simplify notations, and let us write $R = d_X(0)$.
For any $p\in \R^m \backslash \{0\}$, there exists  $x = x(p)\in X$ such that $\dotp{p,x} \geq \dotp{p,x'}$ for all $x'\in X$ (note that $x(p)$ is not necessarily unique - it matters not).
In particular, for any $x'\in X$,
\begin{align*}
\|p-x\|^2 &=\|p\|^2 - 2\dotp{p, x} + \|x\|^2 
\\& \leq     \|p\|^2 - 2\dotp{p, x'} + \|x'\|^2  = \|p-x'\|^2,
\end{align*}
hence $d_X(p) = d(p,x(p))$ (in other words, $p$ belongs to the Voronoi cell of $x(p)$).
Now by hypothesis, 
\[ \lambda:=\min_{\|v\| =1} \max_{x\in X} 2\dotp{v,x} >0. \]
 Let $\eps \in (0,\lambda/2)$, and observe that for any $p\in B(0,\eps) \backslash \{0\}$, we have
 \[d_X(p)^2 =  \|x-x(p)\|^2 =\|p\|^2 - 2\dotp{p, x(p)} + \|x(p)\|^2 \leq \|p\|(\|p\| - \lambda )  +R^2 < R^2.\]
 Hence $0$ is the maximum of $d_X$ over $B(0,\eps)$, and $R-d_X(p) >0$ for any $p\in B(0,\eps)\backslash \{0\}$.
Now consider the continuous map
 \[\Phi: B(0,\eps) \rightarrow \R^m, \ p \mapsto p\frac{\sqrt{R-d_X(p)}}{\|p\|}. \]
Let us show that $\Phi$ is injective.
Indeed, let $p\neq p' \in B(0,\eps)$. If one of those points is $0$ or if $\R_+\cdot p\neq \R_+ \cdot p'$, then clearly $\Phi(p) \neq \Phi(p')$. Otherwise, let $\rho>0$ be such that $p' = \rho p$, and assume without loss of generality that $\rho>1$.
Note that the definition of $x(p)$ above depended only on $p/\|p\|$; hence we can assume that $x(p) = x(p'):=x$.
Then
\begin{align*}
 d_X(p')^2 & = \|p'\|^2 - 2\dotp{p', x} + \|x\|^2  = \rho^2\|p\|^2-2\rho\dotp{p,x} +\|x\|^2 
 \\& = (\rho^2-1)\|p\|^2 - 2(\rho-1)\dotp{p,x} + d_X(p)^2.
\end{align*}
But 
\begin{align*}
(\rho^2-1)\|p\|^2 - 2(\rho-1)\dotp{p,x} & \leq (\rho-1)((\rho+1) \|p\|^2 - \lambda \|p\|) \\
& = (\rho-1)\|p\|( \|p'\| + \|p\| - \lambda ) \leq (\rho-1)\|p\|( 2\eps- \lambda ) <0,
\end{align*}
hence $d_X(p')^2 < d_X(p)^2$ and $\Phi(p)\neq \Phi(p')$.
Thus $\Phi$ is injective, and Brouwer's Theorem on the Invariance of Domain states that $\Phi(B(0,\eps))$ is open, and that $\Phi:B(0,\eps) \rightarrow \Phi(B(0,\eps))$ is a homeomorphism.
Finally, $\|\Phi(p)\|^2 = R-d_X(p)$ for $p\in B(0,\eps)$, hence $ \|q\|^2 = R- d_X\circ \Phi^{-1}(q) $ for $q\in \Phi(B(0,\eps))$ and
\[d_X\circ \Phi^{-1}(q) =R - \sum_{i=1}^m q_i^2, \]
which proves the proposition.

\end{proof}

Our second proposition corresponds to the case where there exists $v\in \Span(\Pi_X(z)-z)\backslash\{0\}$ such that $\dotp{v,x-z}\leq 0$ for all $x\in \Pi_X(z)$; an example of such a configuration can be seen in the middle of Figure \ref{fig:three_configurations}.

\begin{figure}[ht]
    \centering
    \includegraphics[width=0.5\textwidth]{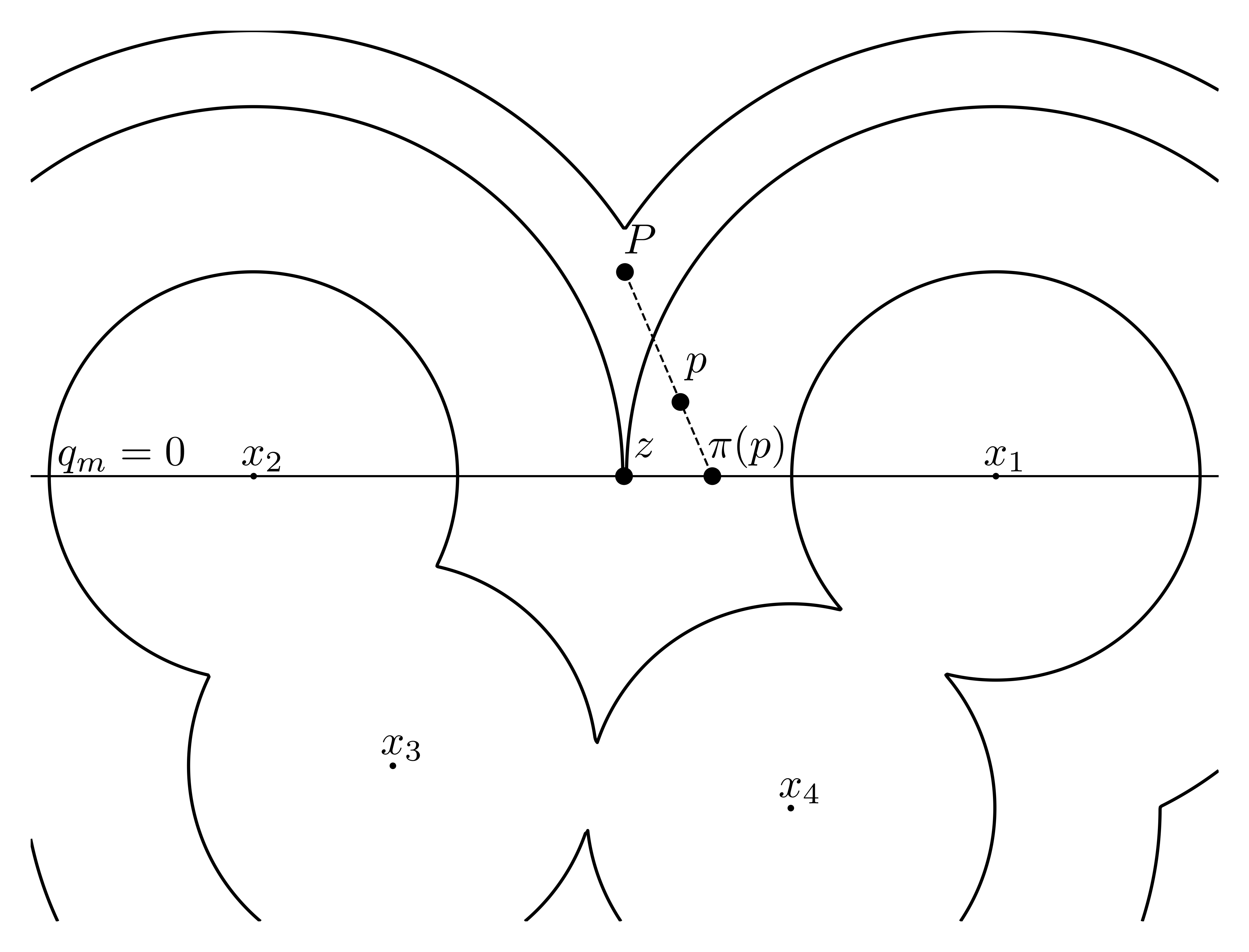}
    \caption{Three level sets of the distance function to the set $X=\{x_1,\ldots,x_4\}$ and the points $P$, $p$ and $\pi(p)$ as defined in the proof of \cref{prop:regular_points}.}
    \label{fig:projection}
\end{figure}

\begin{proposition}\label{prop:regular_points}
   Let $k\geq 2$ and $X = \{x_1,\ldots,x_k\} \subset \R^m$ be such that $\Aff(X) = \R^m$ and that there exists $z\in \R^m$  with $\Pi_X(z) = X$ and  $z\in \Conv(X)$.
   Assume that there exists $v\in \R^m\backslash \{0\}$ such that $\dotp{v,x-z}\leq 0$ for all $x\in X$.
   Then $z$ is a topological regular point of $d_X$.
\end{proposition}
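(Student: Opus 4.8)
The plan is to reduce the statement to a local analysis near $z$ and then, in the spirit of the proof of \cref{prop:local_minimum}, to exhibit an explicit injective continuous map and invoke Brouwer's Theorem on the Invariance of Domain.

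First I would normalize: assume $z=0$, set $R=d_X(0)$, and rescale $v$ so that $\|v\|=1$. Since $\Pi_X(0)=X$, every $x_i$ lies at distance exactly $R$ from the origin, so for every $p\in\R^m$
\[ d_X(p)^2=\min_i\bigl(\|p\|^2-2\dotp{p,x_i}+\|x_i\|^2\bigr)=\|p\|^2+R^2-2h(p),\qquad h(p):=\max_i\dotp{p,x_i}. \]
Here $h$ is the support function of $K:=\Conv(X)$; as $0\in K$ we have $h\geq 0$, and the hypothesis says precisely that $0$ is a boundary point of $K$ with outer unit normal $v$, so $h(v)=0$ and the normal cone $N:=\{w:\dotp{w,x_i}\leq 0\ \forall i\}$ is nontrivial. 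Two elementary consequences are worth recording. First, $d_X$ is strictly increasing along the whole line $\R v$ through the origin: one gets $d_X(sv)^2=R^2+s^2$ for $s\geq 0$ and $d_X(sv)^2=R^2+s^2-2|s|\,h(-v)$ with $h(-v)>0$ for $s\leq 0$ (using $\Aff(X)=\R^m$). Second, along any ray $\R_{\geq 0}u$ one has $d_X(ru)^2=(r-h(u))^2+R^2-h(u)^2$, so on a small ball $d_X$ is ``valley-shaped'' in the radial variable away from the directions $u\in N$, and $0$ is the only differential critical point of $d_X$ in a neighbourhood of itself (a compactness argument over the finitely many faces of $K$).

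The heart of the argument is to build a neighbourhood $U$ of $0$ and a continuous injection $\Phi\colon U\to\R^m$ with $\Phi(0)=0$ whose first coordinate is $d_X-R$; the Invariance of Domain then makes $\Phi$ a homeomorphism onto an open neighbourhood of $0$, and $\Phi^{-1}$ is the chart demanded by the definition of a topological regular point. To choose the remaining $m-1$ coordinates I would use the formula above to describe the level sets near $0$: writing $p=ru$ with $\|u\|=1$, the equation $d_X(p)=\ell$ becomes $r^2-2h(u)r+(R^2-\ell^2)=0$, so each level set is, near $0$, a ``radial graph'' $u\mapsto\rho(u,\ell)u$, the relevant branch $\rho$ being the root close to $0$; its domain in the unit sphere varies with $\ell$ — almost all directions when $\ell<R$, a thin cone around $N$ when $\ell>R$ — and $\{d_X=R\}$ agrees near $0$ with $\{2h(u)u:u\in S^{m-1}\}$, which is pinched to the origin along $N\cap S^{m-1}$. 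One then constructs the transverse coordinate $\pi\colon U\to\R^{m-1}$ as a continuous map that restricts to a homeomorphism of each level set $\{d_X=\ell\}\cap U$ onto a fixed model and extends continuously across $0$; with $\Phi=(d_X-R,\pi)$, injectivity is immediate, since two points with the same image lie on a common level set and are separated there by $\pi$.

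I expect the organization of this last step — reconciling the behaviour of the level sets on $\{d_X<R\}$, where they spread over essentially all directions, with that on $\{d_X>R\}$, where they are confined near $N$, across the singular level $\{d_X=R\}$ — to be the main obstacle; this is exactly the place where the genericity hypothesis of earlier work must be replaced by a direct geometric argument. A workable alternative, should the explicit map prove unwieldy, is to produce a continuous pseudo-gradient field $W$ on $U\setminus\{0\}$ along which $d_X$ strictly increases (possible since $\nabla d_X$ does not vanish there), extend it over $0$ by $W(0)=v$ using the strict monotonicity of $d_X$ along $\R v$, and use the resulting flow to trivialize $d_X$ near $0$; again the delicate point is the control of the flow near the critical point $0$.
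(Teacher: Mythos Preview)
Your overall strategy matches the paper's: build a continuous injection $\Phi=(d_X-R,\pi)$ on a small ball and appeal to Invariance of Domain. The gap is that you never construct $\pi$. You describe the level sets correctly as radial graphs $u\mapsto\rho(u,\ell)u$ over subsets $D_\ell\subset S^{m-1}$, but the whole difficulty is that the domains $D_\ell$ vary with $\ell$ (shrinking towards $N\cap S^{m-1}$ as $\ell\downarrow R$ from above, expanding as $\ell\uparrow R$ from below) and the singular level $\{d_X=R\}$ is pinched at the origin; producing a single continuous $\pi$ that restricts to a homeomorphism on every level and extends across $0$ is precisely what must be done, and you only flag it as ``the main obstacle''. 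The flow alternative has the same unresolved issue: a continuous vector field with $W(0)=v$ along which $d_X$ is strictly increasing does not come for free, since the Clarke subdifferential $\partial d_X$ is genuinely set-valued near $0$ and need not cluster around $v$.

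The paper resolves this with two ideas you are missing. First, it does not use the hypothesised $v$ directly: a lemma (proved via cone duality) upgrades it to a vector $v$ with the extra property that $\dotp{v,w}\leq 0\Rightarrow h(w)>0$ for every $w\neq 0$, i.e.\ the half-space $\{\dotp{v,\cdot}\leq 0\}$ meets the normal cone $N$ only at the origin. Second, instead of radial projection from $0$, the paper projects from an external point $P=\rho v$ with small $\rho>0$: the map $\pi(p)$ is simply the first $m-1$ coordinates of the central projection of $p$ onto $\{q_m=0\}$ from $P$. Injectivity of $\Phi=(d_X-R,\pi)$ then reduces to showing that $d_X$ is strictly monotone along every line through $P$ inside a small ball, which is checked by Lebourg's mean value theorem and an explicit estimate $\dotp{P-p,p-x}\geq C\|p\|$ for every $x\in\Pi_X(p)$; the upgraded choice of $v$ is exactly what makes this estimate go through in both half-spaces $p_m\leq 0$ and $p_m>0$. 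Your radial picture, centred at the singular point itself, cannot see this monotonicity and forces you to confront the change of topology of $D_\ell$ head-on.
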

\begin{proof}
Let us assume without loss of generality that $z=0$,  and let us write $R = d_X(0)$.
\cref{lem:existence_of_v} below states that there exists $v\in \R^m\backslash \{0\}$ such that $\dotp{v,x} \leq 0$ for all $x\in X$, and such that $  \max_{x \in X} \dotp{x,w}>0 $
for all $w\in \R^m\backslash\{0\}$ with $\dotp{v,w} \leq 0$.
By compactness, there exists $\lambda >0$ such that
\begin{equation}\label{eq:min_scalar_product}
\min_{\|w\| =1, \dotp{v,w} \leq 0} \max_{x\in X}\dotp{w,x} > \lambda.
\end{equation}
Up to rescaling $v$ and applying an isometric change of coordinates, we can assume that $v$ is equal to the $m$-th standard basis vector $e_m$.
Let $\rho \in (0,\lambda/4) $, and define $P:= \rho e_m$. Let also $\eps \in (0,\min(\rho/2, R/2, \lambda/4))$.

Now for any $p \in B(0,\eps)$, let $\pi(p)$ denote the first $m-1$ coordinates of the projection $p+ (p-P)\frac{p_m}{\rho - p_m}$ of $p$ onto the hyperplane $\{q\in \R^m : q_m = 0\}$ along the direction $p-P$.
Consider the continuous map
\[\Psi:B(0,\eps) \rightarrow \R \times \R^{m-1}, \ p \mapsto (d_X(p)-R,\pi(p)). \]
The situation is illustrated in Figure \ref{fig:projection}.
We are going to show that thanks to our careful choices of $P$ and $\eps$, $\Psi$ is injective.

It is obvious that $\Psi(p)\neq \Psi(p')$ if $p,p' \in B(0,\eps)$ do not belong to the same line passing through $P$.
Thus we only need to show that $d_X$ is injective on the intersection of any line passing through $P$ with $B(0,\eps)$.
Let $p'\neq p''\in B(0,\eps)$ belong to the same line passing through $P$.
As $d_X$ is Lipschitz, Lebourg's Mean Value Theorem \cite[Theorem 2.3.7]{clarke1990optimization} states that there exists $p\in[p',p'']$ such that 
\begin{equation}\label{eq:mean_value}
d_X(p'') - d_X(p') \in \{ \dotp{u, p'' - p'} : \ u\in\partial d_X(p)\}, 
\end{equation}
where $\partial d_X(p)$ is Clarke's generalized gradient for $d_X$ at $p$.
Theorem 2.5.1 from \cite{clarke1990optimization} then shows that
\[\partial d_X(p) =  \Conv\left(\left\{ \frac{p-x}{\|p-x\|} : \ x\in\Pi_X(p) \right\}\right).\]
As $p,p',p''$ belong to the same line passing through $P$, we have $p''-p' = \mu(P-p)$ for some $\mu \in \R^*$. Without loss of generality, we can assume that $\mu>0$.
Let $x\in \Pi_X(p)$;
as in \cref{prop:local_minimum}, $x$ is necessarily such that $\dotp{p,x} = \max_{x'\in X} \dotp{p,x'}$.

If $p_m \leq 0 $, then Inequality \eqref{eq:min_scalar_product} states that $\dotp{p,x} > \lambda \|p\|$, hence
\begin{align*}
  \dotp{P-p,p-x} & = \dotp{P,p} -\dotp{P,x} -\|p\|^2 + \dotp{p,x} > -\|P\|\|p\|  - \|p\|^2 +   \lambda \|p\| 
  \\ & \geq \|p\|( -\rho - \eps +\lambda) \geq \lambda\|p\| /2,
\end{align*}
where we also use the fact that $\dotp{P,x} \leq 0$ by construction.
Likewise, if $p_m>0$, then
\begin{align*}
  \dotp{P-p,p-x}  = & \dotp{P -p,p} -  \dotp{P -p,x}
  \\ \geq   & \rho p_m -\|p\|^2  -  \dotp{P -p_me_m ,x} + \dotp{p-p_me_m,x}.
\end{align*}
But Inequality \eqref{eq:min_scalar_product} applied to $p-p_me_m$ states that $ \dotp{p-p_me_m,x} \geq \lambda\|p-p_me_m\|$.
As $\dotp{e_m, x} \leq 0$, we also have that
\[-\dotp{P -p_me_m ,x}  = - (\rho-p_m) \dotp{e_m ,x} \geq  - (\rho-\eps) \dotp{e_m ,x}\geq 0.\]
Hence
\begin{align*}
  \dotp{P-p,p-x} \geq   & \rho p_m -\|p\|^2  +  \lambda\|p-p_me_m\| \geq \rho \|p\| -\|p\|^2 \geq \rho \|p\|/2,
\end{align*}
where we use the facts that $\rho\leq \lambda$, that $p_m + \|p-p_me_m\| \geq\|p\| $ and that $\|p\|\leq \eps < \rho/2$.
By combining the cases $p_m\leq 0$ and $p_m>0$, we find that there exists $C>0$ such that
\begin{align*}
  \dotp{P-p,p-x} \geq C \|p\|.
\end{align*}
Thus for any $x\in X$ such that $d(x,p) = d_X(p)$,
\[\dotp{p-x, p''-p'} = \dotp{p-x,\mu(P-p)} \geq C\mu \|p\|,\]
and any $u\in \partial  d_X(p) =  \Conv\left(\left\{ \frac{p-x}{\|p-x\|} : \ x\in\Pi_X(p) \right\}\right)$ satisfies
\[ \dotp{u, p''-p'}\geq  \frac{1}{\|p-x\|}C\mu \|p\| \geq \frac{C\mu}{2R}\|p\|.\]
Equation \eqref{eq:mean_value} then states that
\[d_X(p'') - d_X(p') \geq \frac{C\mu}{2R}\|p\|. \]
This is enough to immediately conclude that $d_X(p'') \neq d_X(p')$ when the segment $[p',p'']$ does not contain $0$ (as in that case $\|p\|>0$). When it does, additional elementary arguments yield the same conclusion.
Hence we have proved the injectivity of $d_X$, and thereby that of $\Psi$.

Brouwer's Theorem on the Invariance of Domain then states that $\Psi(B(0,\eps))$ is open, and that $\Psi:B(0,\eps) \rightarrow \Psi(B(0,\eps))$ is a homeomorphism.
As $\Psi(p)_1 = d_X(p)-R$ for any $p\in B(0,\eps)$,
\[ d_X\circ \Psi^{-1}(q) = R+ q_1\]
for any $q \in \Psi(B(0,\eps))$, which proves the proposition.

\end{proof}

Let us now prove the Lemma used in the proof of \cref{prop:regular_points}.
Remember that the dual of a cone $C\subset \R^m$ is defined as $C^* = \{v\in\R^m : \ \dotp{v,u}\geq C \ \forall u \in C\}$.

\begin{lemma}\label{lem:existence_of_v}
    Let $X \subset \R^m$ be such that $\Aff(X) = \R^m$. Assume that there exists $v_0\in \R^m\backslash \{0\}$ such that $\dotp{v_0,x} \geq 0$ for all $x\in X$.
Then there exists $v\in \R^m\backslash\{0\}$ such that $\dotp{v,x}\leq 0$ for all $x \in X$ and that
\[ \dotp{v,w} \leq 0 \ \Rightarrow  \ \max_{x \in X} \dotp{x,w}>0 \]
for all $w\in \R^m\backslash\{0\}$.
\end{lemma}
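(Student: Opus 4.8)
The plan is to recast the statement in the language of convex cones and obtain $v$ as the point of a compact cross-section of a cone that lies closest to the origin. Introduce the closed convex cone $C := \{v\in\R^m : \dotp{v,x}\le 0 \text{ for all } x\in X\}$. The hypothesis on $v_0$ says exactly that $-v_0\in C\setminus\{0\}$, so $C\ne\{0\}$. Moreover, since $\max_{x\in X}\dotp{x,w}\le 0$ holds precisely when $w\in C$, the implication to be proved is, by contraposition, equivalent to: $\dotp{v,w}>0$ for every $w\in C\setminus\{0\}$. So the whole statement reduces to producing $v\in C\setminus\{0\}$ such that $\dotp{v,w}>0$ for all $w\in C\setminus\{0\}$.

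First I would show that $C$ is pointed, i.e. $C\cap(-C)=\{0\}$: if $w$ and $-w$ both lie in $C$, then $\dotp{w,x}=0$ for every $x\in X$, so $w$ is orthogonal to $\Span(X)$, which contains $\Aff(X)=\R^m$; hence $w=0$. This is the only use of the hypothesis $\Aff(X)=\R^m$, and it is the crux of the proof. Next, set $B:=\Conv\p{C\cap\{w\in\R^m:\|w\|=1\}}$. Since $C$ is a nontrivial cone, $C\cap\{\|w\|=1\}$ is nonempty and compact, so $B$ is a nonempty compact convex subset of $C$. The key geometric observation is that $0\notin B$: if it were, then $0$ would be a convex combination $\sum_i\lambda_i u_i$, with finitely many $\lambda_i>0$ summing to $1$ and unit vectors $u_i\in C$; this combination must have at least two terms (a single term would give $\|u_1\|=0$), and then $-u_1=\lambda_1^{-1}\sum_{i\ge2}\lambda_i u_i$ lies in $C$ because $C$ is a convex cone, contradicting pointedness.

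Now let $v$ be the unique point of $B$ nearest to the origin. By construction $v\in B\subseteq C$, so $\dotp{v,x}\le 0$ for all $x\in X$, and $v\ne0$ because $0\notin B$. The variational characterization of the metric projection onto the convex set $B$ gives $\dotp{v,b-v}\ge 0$ for all $b\in B$, hence $\dotp{v,b}\ge\|v\|^2>0$ for all $b\in B$; in particular $\dotp{v,u}>0$ for every unit vector $u\in C$, and therefore, by homogeneity, $\dotp{v,w}>0$ for every $w\in C\setminus\{0\}$. In view of the reduction in the first paragraph this finishes the argument: if $w\ne0$ satisfies $\dotp{v,w}\le0$ then $w\notin C$, i.e. $\dotp{x,w}>0$ for some $x\in X$, so $\max_{x\in X}\dotp{x,w}>0$.

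The one genuinely non-formal ingredient is the pointedness of $C$ together with its consequence $0\notin B$ — this is exactly where the affine-spanning hypothesis is spent. Everything else is routine convex geometry: existence and uniqueness of the nearest point in a compact convex set, the obtuse-angle inequality for that projection, and compactness of the convex hull of a compact set. The only bookkeeping point to keep in mind is that $C\cap\{\|w\|=1\}$ must be nonempty for the construction to make sense, which is immediate from $C\ne\{0\}$ and $C$ being a cone.
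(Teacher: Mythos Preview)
Your proof is correct and takes a genuinely different route from the paper's. Both arguments begin with the same closed convex cone (your $C$ is the paper's $-K$) and both use the hypothesis $\Aff(X)=\R^m$ only to establish that this cone is pointed. From there the paper dualizes: it argues that pointedness of $K$ forces $K^*$ to have nonempty interior, and then invokes a separate lemma (proved via Hahn--Banach separation) to produce a point in $\mathring{(K^*)}\cap K$; membership in the interior of $K^*$ is what yields the strict inequality $\dotp{v,w}<0$ for $w\in K\setminus\{0\}$. You instead stay inside $C$ and make a direct construction: take the compact convex base $B=\Conv(C\cap S^{m-1})$, observe $0\notin B$ by pointedness, and let $v$ be the metric projection of $0$ onto $B$, so that the obtuse-angle inequality gives $\dotp{v,b}\ge\|v\|^2>0$ for every $b\in B$ and hence for every unit vector in $C$.

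What your approach buys is self-containment and elementarity: no dual cones, no auxiliary separation lemma, and no appeal to structural facts about interiors of duals of salient cones; the only tools are Carath\'eodory (to write $0$ as a finite convex combination when arguing $0\notin B$), compactness of the convex hull of a compact set, and the variational inequality for nearest-point projection. The paper's approach, by contrast, situates the result within standard convex-cone duality and isolates the geometric core as a clean stand-alone statement (its \cref{lem:conic_lemma}), which may be of independent use. Both are perfectly valid; yours is arguably the shorter path to this particular lemma.
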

\begin{proof}
    Consider the closed convex cone $ K := \{w\in \R^m : \ \dotp{w,x} \geq 0 \ \forall x \in X\} $. By hypothesis, we know that $K\neq \{0\}$, as $v_0 \in K$.
    Consider now the dual cone $K^*$ of $K$.
    The cone $K^*$ is \textit{salient}, i.e. it does not contain any non-trivial linear subspace, as otherwise the set $X$ would be included in the orthogonal of such a subspace, contradicting the hypothesis that $\Aff(X) = \R^m$.
    It is known that the interior of the dual of any closed, convex and salient cone is non-empty  (e.g. as a consequence of \cite[Theorems 6.2 and 14.1]{Rockafellar1970}), hence the convex cone $C := K^*$ has non-empty interior.
    Remember that the dual of the dual of a  closed convex cone is equal to the cone itself (see e.g. \cite[Chapter 14]{Rockafellar1970}); thus $C^* = (K^*)^* = K$.
    
    We can now apply Lemma \ref{lem:conic_lemma} below to the convex cone $C$ and its dual $C^* = K$; it states that $\mathring C \cap C^* =\mathring {( K^*)} \cap K \neq \emptyset $.
    Let $v$ be such that $-v \in \mathring {( K^*)} \cap K$. As $K \neq \{0\}$ implies that $0\not \in \mathring {( K^*)}$, $v$ is necessarily non-zero.
    Since $-v\in K$, we have $\dotp{v,x} \leq 0$ for all $x\in X$.
    Furthermore, the fact that $-v$ belongs to the interior of $K^*$ directly implies that $\dotp{v,w} <0 $ for all $w\in K\backslash\{0\}$.
    %, or equivalently that $\dotp{v,w} >0$ for all $w\in (-K)\backslash\{0\}$.
    Let us show that this means that $\dotp{v,w} \leq 0 \Rightarrow  \max_{x \in X} \dotp{x,w}>0 $ for all $w\in \R^m \backslash\{0\}$, which is enough to conclude the proof.

    Indeed, if $w \in \R^m \backslash \{0\}$ is such that $\max_{x \in X} \dotp{x,w}\leq 0$, then $\dotp{x,-w} \geq 0$ for all $x\in X$, hence $-w \in K\backslash\{0\}$. As shown above, this means that $\dotp{v,-w} <0 $, or equivalently that $\dotp{v,w} >0$.
    By contraposition, this proves our claim.

\end{proof}

\begin{lemma}\label{lem:conic_lemma}
Let $C\subset \R^m$ be a convex cone whose interior $\mathring C$ is non-empty, and assume that $C^* \neq \{0\}$. 
Then $\mathring C \cap C^* \neq \emptyset$.

% Let $X=\{x_1,\ldots, x_k\} \subset \R^m$ be such that $\Aff(X) = \R^m$ and such that $X$ is included in a (closed) half-space of $\R^m$.
% Then there exists $v\in \R^m\backslash\{0\}$ such that $\dotp{v,x_i}\leq 0$ for all $x_i \in X$ and that
% \[ \dotp{v,w} \leq 0 \ \Rightarrow  \ \max_{} \]
% for all $w\in \R^m$.

\end{lemma}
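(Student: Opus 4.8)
The plan is to show that the interior of $C$ and the dual cone $C^*$ must intersect. First I would pick any nonzero $v \in C^*$, so that $\dotp{v,u} \geq 0$ for all $u \in C$. The goal is to find a point of $\mathring C$ that also lies in $C^*$. A natural candidate is to look for a point $w$ in $\mathring C$ that is "close enough" to being in the direction of $v$; more precisely, I would try to exploit the fact that $\mathring C$ is open and nonempty, so it contains a point $w_0$ together with a small ball around it. The key observation is that $C^*$ is defined by the (weak) inequalities $\dotp{\cdot,u}\geq 0$ ranging over $u \in C$, equivalently over $u$ in a compact cross-section of $C$; so membership in $C^*$ is a closed condition, and I want to produce a point of $\mathring C$ satisfying it.

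The cleanest route I can see: since $v \in C^*$, consider the closed half-space $H = \{x : \dotp{v,x}\geq 0\}$ and note $C \subset H$ is false in general — rather $C^{**} \supseteq$ the relevant object. Instead, here is the argument I would actually carry out. Take $w_0 \in \mathring C$. For each $u$ in the unit sphere intersected with $C$ (a nonempty compact set, since $C$ has nonempty interior and hence $C \neq \{0\}$), we have $\dotp{w_0, u} \geq 0$ automatically? No — that would already say $w_0 \in C^*$, which need not hold. So the real content is that $w_0$ might fail to lie in $C^*$. To fix this, I would average: set $w = \int_{S} u \, d\mu(u)$ for a suitable probability measure $\mu$ supported on a compact cross-section $S$ of $C$, or more simply take $v$ itself and show $v$ can be perturbed into $\mathring C$. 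Since $v \in C^* \setminus\{0\}$, the point $v$ satisfies $\dotp{v,u}\geq 0$ for all $u\in C$; I want to move $v$ slightly to land inside $\mathring C$ while preserving these inequalities strictly enough. Because $\mathring C \neq \emptyset$, pick $w_0 \in \mathring C$; then for small $t>0$ the point $w_t = v + t w_0$ lies in... not necessarily $\mathring C$. The correct statement is that $\mathring C + C \subseteq \mathring C$ (adding any element of the closure of a convex cone to an interior point keeps you in the interior), but $v$ need not be in $\overline{C}$.

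So the honest plan is: use that $C^* \neq \{0\}$ to get a nonzero $v$ with $\dotp{v,\cdot}\geq 0$ on $C$; use that $\mathring C \neq \emptyset$ to get $w_0 \in \mathring C$. Now $\dotp{v, w_0} \geq 0$. If $\dotp{v,w_0} > 0$, I claim $w_0$ can be nudged into $C^*$: for $\delta > 0$ small, is $w_0 - \delta(\text{correction}) \in C^*$? The point of $\mathring C$ being interior means $\dotp{w_0, u} > 0$ for every $u$ in a relatively open set, but I need $\dotp{w_0,u}\geq 0$ for all $u \in C$. This is exactly the statement $w_0 \in C^*$, i.e. $\mathring C \subseteq C^*$ would need $C^{**} = \overline C$... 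Here is the resolution: \textbf{I would apply the bipolar theorem.} Since $C$ is a convex cone with nonempty interior, $\overline C = C^{**}$. Take $w_0 \in \mathring C$. For any $u \in C = \overline{C}^{\,\circ\circ}$-generators, $\dotp{w_0,u}$ could be negative — unless $C$ is self-dual, which it isn't in general. I now realize the lemma as stated asserts $\mathring C \cap C^* \neq \emptyset$, and the right proof is: \emph{$C^*$ has nonempty interior is false}; rather, since $C$ has nonempty interior, $C^*$ is salient (pointed), and I should instead pick $v \in \mathring{(C^*)}$ — wait, $C^*$ need not have nonempty interior either. The main obstacle, and the step I expect to be delicate, is precisely sorting out which cone has nonempty interior; I would resolve it by observing that since $\mathring C \neq \emptyset$, the cone $C^*$ is pointed, hence lies in an open half-space, so there is $w_0\in \mathring C$ with $\dotp{v,w_0}>0$ for a chosen generator $v$ of $C^*$, and then a compactness/continuity argument on a cross-section of $C$ shows a small perturbation of $w_0$ inside $\mathring C$ satisfies all the defining inequalities of $C^*$, placing it in $\mathring C \cap C^*$.
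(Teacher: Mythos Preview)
Your proposal never arrives at a valid argument. After several false starts you settle on: pick $v\in C^*\setminus\{0\}$ and $w_0\in\mathring C$, then ``a small perturbation of $w_0$ inside $\mathring C$ satisfies all the defining inequalities of $C^*$''. This last step is simply untrue. Take $C=\{(x,y):y\geq 0\}\subset\R^2$; then $C^*=\{(0,y):y\geq 0\}$, and a generic interior point such as $w_0=(1,1)$ admits \emph{no} small perturbation into $C^*$, since $C^*$ is a half-line. The defining inequalities of $C^*$ are $\dotp{\cdot,u}\geq 0$ for all $u\in C$, and there is no reason an arbitrary $w_0\in\mathring C$ is anywhere near satisfying them. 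Compactness of a cross-section of $C$ does not help here: the obstruction is not that finitely many inequalities fail by a little, but that a fixed inequality (e.g.\ $\dotp{\cdot,(1,0)}\geq 0$ and $\dotp{\cdot,(-1,0)}\geq 0$ simultaneously) forces you onto a lower-dimensional set far from $w_0$.

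The paper bypasses all of this with a clean separation argument: assume $\mathring C\cap C^*=\emptyset$ and apply Hahn--Banach to the disjoint convex sets $\mathring C$ (open) and $C^*$. Because both are cones, the separating functional $v$ satisfies $\sup_{\mathring C}\dotp{v,\cdot}\leq 0$ and $\inf_{C^*}\dotp{v,\cdot}=0$. The first condition (extended by continuity to $\overline{\mathring C}\supseteq C$) gives $-v\in C^*$; plugging $w=-v$ into the second yields $-\|v\|^2\geq 0$, contradicting $v\neq 0$. The key idea you are missing is precisely this: rather than trying to construct a point in the intersection, assume disjointness and derive a contradiction from the separating hyperplane, exploiting that cones force the separation constants to be zero.
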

%Note that under the hypotheses of the lemma, any $v\in \mathring C \cap C^*$ is non-zero, as $C^* \neq \{0\}$ implies that $0\not \in \mathring C$.
\begin{proof}
Suppose that  $\mathring C \cap C^* = \emptyset$. Then $\mathring C$ is the interior of a convex set, and as such it is a non-empty convex open set. The dual cone $C^*$ is a non-empty convex set. Hence we can apply Hahn-Banach's Separation Theorem, which states that there exists $v\in \R^m$ such that 
\begin{equation}\label{eq:first_HB_inequality}
    \sup_{u\in \mathring C} \dotp{v, u} \leq \inf_{w\in C^*} \dotp{v, w}
\end{equation}
and
\begin{equation}\label{eq:second_HB_inequality}
 \dotp{v, u} < \inf_{w\in C^*} \dotp{v, w}   
\end{equation}
for all $u\in \mathring C$; in particular, $v\neq 0$.
As $\inf_{w\in C^*} \dotp{v, w} \in \{0,-\infty\}$, Inequality \eqref{eq:second_HB_inequality} implies that $\inf_{w\in C^*} \dotp{v, w} =0$.
This, in turn, means that $\sup_{u\in \mathring C} \dotp{v, u} = 0$.

Consequently, the vector $-v$ is such that $\inf_{u\in \mathring C} \dotp{-v, u} = 0$.
As $\overline{\mathring C} = \overline{C} \supseteq C$ (this holds for any convex set), and as $ u\mapsto \dotp{-v,u}$ is continuous, 
we have $\inf_{u\in  C} \dotp{-v, u} = 0$.
This means that $-v \in C^*$, but we have shown above that  $\inf_{w\in C^*} \dotp{v, w} =0$, hence $\dotp{v,-v} = -\|v\|^2 \geq  0$, which contradicts the fact that $v\neq 0$.
    
\end{proof}

We can now prove \cref{thm:main_thm}. It is essentially a combination of Propositions \ref{prop:local_minimum} and \ref{prop:regular_points}.

\begin{proof}[Proof of \cref{thm:main_thm}]
By definition, the only differential critical points of $d_X$ are the points $z\not \in X$ such that $z\in \Conv(\Pi_X(z))$, and the points $x\in X$.
The fact that differential regular points of $d_X$ are also topologically regular is shown in \cite[Section 7]{arnal2023critical} (see also \cite{GroveCriticalPoints}), which proves the last point of the statement of \cref{thm:main_thm}.

If $x\in X$, then $d_X$ coincides with $d_{\{x\}}$ on a small ball $B(x,\eps)$ around $x$.
Consider the map $\Xi : B(x,\eps) \rightarrow B(x,\eps^2), p \mapsto x+ \frac{(p-x)}{
\sqrt{\|p-x\|}}$.
It is a homeomorphism, and $d_X\circ \Xi^{-1}(q) = \sum_{i=1}^n q_i^2$ for all $q\in B(x,\eps^2)$, hence $x$ is a non-degenerate topological critical point of $d_X$ of index $0$. This proves the first point of the statement of the Theorem.

Now let $z\in \R^n\backslash X$ be such that $z\in \Conv(\Pi_X(z))$.
On a small open neighborhood $W$ of $z$, the functions $d_X$ and $d_{\Pi_X(z)}$ coincide.
Up to an isometric change of coordinates, we can assume that $z=0$ and that $\Aff(\Pi_X(z)) = \R^m \times \{0\} \subset \R^n$ for some $m\geq 1$ (note that with this change of coordinates, $\Span(\Pi_X(z) - z) = \Aff(\Pi_X(z))$).
Let us also write $R:= d_X(z)$.

If there exists no $v\in \Aff(\Pi_X(z))\backslash\{0\}$ such that $\dotp{v,x}\leq 0$ for all $x\in \Pi_X(z)$, then \cref{prop:local_minimum} applies to the set $\Pi_X(z)$ and the subspace $\Aff(\Pi_X(z))\cong \R^m$, and there exist open neighborhoods $U_1,U_2\subset \R^m$ of $0$ and a homeomorphism $\Phi_1 : (U_1,0)\rightarrow (U_2,0)$ such that $d_{\Pi_X(z)}\circ \Phi_1^{-1} (q) = R -\sum_{i=1}^mq_i^2$ for all $q\in U_2$.
We define the homeomorphism
\begin{align*}
    \Phi_2 : U_2 &\longrightarrow  \Phi_2(U_2) =: U_3,
    \\ p & \longmapsto  \frac{p}{\|p\|}\sqrt{R^2-(R-\|p\|^2)^2},
\end{align*}
%$\Phi_2 : U_2 \rightarrow \Phi_2(U_2) =: U_3, p\mapsto \frac{p}{\|p\|}\sqrt{R-\sqrt{R^2-\|p\|^2}}$,
% \[\Phi_2 : U_2 \rightarrow \Phi_2(U_2) =: U_3, p\mapsto \frac{p}{\|p\|}\sqrt{R^2-(R-\|p\|^2)^2},\]
and we write $p = (p^m,p^{n-m})\in \R^{m}\times\R^{n-m}$ for all $p \in \R^n$.
Consider the homeomorphism
\begin{align*}
    \Phi_3 : U_1\times B^{n-m}(0,\eps) &\longrightarrow  U_3\times B^{n-m}(0,\eps),
    \\ (p^m,p^{n-m}) & \longmapsto  (\Phi_2\circ \Phi_1(p^m), p^{n-m}),
\end{align*}
where $B^{n-m}(0,\eps)$ is the ball of radius $\eps$ centered at $0$ in $\R^{n-m}$.
For any $p=(p^m,p^{n-m})$ close to $0$, we have $d_X(p) = d_{\Pi_X(z)}(p) = \sqrt{(d_{\Pi_X(z)}(p^m))^2 + \|p^{n-m}\|^2 }$, hence
\begin{align*}
    d_X\circ \Phi_3^{-1}(q) 
    & =\sqrt{(d_{\Pi_X(z)}\circ \Phi_1^{-1} \circ \Phi_2^{-1}(q^m))^2 + \|q^{n-m}\|^2 }
    \\ & = \sqrt{(R - \| \Phi_2^{-1}(q^m)\|^2)^2 + \|q^{n-m}\|^2 }
    \\ & =\sqrt{(R - (R -\sqrt{R^2 - \|q^m\|^2}))^2 + \|q^{n-m}\|^2 }
     \\ & =\sqrt{R^2 - \|q^m\|^2 + \|q^{n-m}\|^2 } = \sqrt{R^2 - \sum_{i=1}^mq_i^2+ \sum_{i=m+1}^nq_i^2 }.
\end{align*}
Up to making the sets $U_1,U_2,U_3$ smaller, Morse's Lemma then states that there exists a diffeomorphism $\Phi_4:U_3\times B^{n-m}(0,\eps) \rightarrow \Phi_4(U_3\times B^{n-m}(0,\eps) ) =:U_4$ that maps $0$ to $0$ such that 
$d_X\circ \Phi_3^{-1}\circ \Phi_4^{-1}(q) = R -\sum_{i=1}^mq_i^2+ \sum_{i=m+1}^nq_i^2.$
Hence $z$ is a non-degenerate topological critical point of index $m = \dim(\Span(\Pi_X(z)-z))$ of $d_X$.

Likewise, if there exists $v\in \Aff(\Pi_X(z))\backslash\{0\}$ such that $\dotp{v,x}\leq 0$ for all $x\in \Pi_X(z)$, then \cref{prop:regular_points} applies to the set $\Pi_X(z)$ and the subspace $\Aff(\Pi_X(z))\cong \R^m$, and there exist open neighborhoods $V_1,V_2\subset \R^m$ of $0$ and a homeomorphism $\Psi_1 : V_1\rightarrow V_2$ such that $d_{\Pi_X(z)}\circ \Psi_1^{-1} (q) = R + q_1$ for all $q\in V_2$.
As in the case above, one can define (using $\Psi_1$) a continuous local change of coordinates $\Psi$ defined on an open neighborhood of $0$ in $\R^n$  such that 
$d_{\Pi_X(z)}\circ \Psi (q) = R + q_1$ (the Implicit Function Theorem is applied instead of Morse's Lemma).

% We define the homeomorphism
% \begin{align*}
%     \Psi_2 : V_2 &\longrightarrow  \Phi_2(V_2) =: V_3,
%     \\ (p_1,p_2\ldots,p_m) & \longmapsto  ((p_1+R)^2-R^2,p_2\ldots,p_m),
% \end{align*}
% and the homeomorphism
% \begin{align*}
%     \Psi_3 : V_1\times B^{n-m}(0,\eps) &\longrightarrow  V_3\times B^{n-m}(0,\eps),
%     \\ (p^m,p^{n-m}) & \longmapsto  (\Psi_2\circ \Psi_1(p^m), p^{n-m}).
% \end{align*}
% For any $p=(p^m,p^{n-m})$ close to $0$, we have 
% \begin{align*}
%     d_X\circ \Psi_3^{-1}(q) 
%     & =\sqrt{(d_{\Pi_X(z)}\circ \Psi_1^{-1} \circ \Psi_2^{-1}(q^m))^2 + \|q^{n-m}\|^2 }
%     \\ & = \sqrt{(R + (\Psi_2^{-1}(q^m))_1 )^2 + \|q^{n-m}\|^2 }
%     \\ & =\sqrt{(R + \sqrt{q_1 +R^2} - R )^2 + \|q^{n-m}\|^2 }
%      \\ & =\sqrt{R^2 +q_1 + \|q^{n-m}\|^2 }.
% \end{align*}
% Up to making the sets $V_1,V_2,V_3$ smaller, the Implicit Function Theorem then states that there exists a diffeomorphism $\Psi_4:V_3\times B^{n-m}(0,\eps) \rightarrow \Psi_4(V_3\times B^{n-m}(0,\eps) ) =:V_4$ that maps $0$ to $0$ such that 
% $d_X\circ \Psi_3^{-1}\circ \Psi_4^{-1}(q) = R +q_1.$
Hence $z$ is a topological regular point: this completes the proof of the theorem.
\end{proof}

\section*{Acknowledgements}
The author would like to thank Pierre-Louis Blayac, Vincent Divol, David Cohen-Steiner and Fred Chazal for helpful discussions.

\bibliography{biblio_new}

\end{document}